\documentclass[12pt]{amsart}  

\usepackage{custom_preamble} 

\begin{document}

\title{Boolean functions with small spectral norm, revisited}

\author{\tsname}
\address{\tsaddress}
\email{\tsemail}

\maketitle

\section{Introduction}

The purpose of this note is to present the argument from \cite{san::12} for groups of the form $\F_2^n$.  Throughout $G$ will denote such a group; the whole point of our arguments is that the results will not depend on $n$, but we shall touch on this later in the introduction.

Before motivating the problem we need a couple of definitions.  Given $f:G \rightarrow \R$ we define its \textit{Fourier transform} and \textit{spectral norm} respectively by
\begin{equation*}
\wh{f}(r):=\E_{x \in G}{f(x)(-1)^{r^tx}} \text{ for all }r \in G, \text{ and }\|f\|_{A}:=\sum_r{|\wh{f}(r)|}.
\end{equation*}
It has been known since \cite[Theorem 4.12]{kusman::} that if a \textit{Boolean function} (meaning a function taking only the values $0$ and $1$) has small spectral norm then (an approximation to) it can be easily learnt \cite[p1338]{kusman::}.  (See also \cite{man::}.)  In view of this it is natural to ask how rich the class of Boolean functions with small spectral norm is, and this was part of what motivated the paper \cite{gresan::}.

There are some obvious members: if $V \leq G$ then it is easy to check that $\|1_V\|_{A}=1$ and so if $V_1,\dots,V_L \leq G$ then\footnote{This notation means that there are signs $\varepsilon_i \in \{-1,1\}$ such that $f=\sum_{i=1}^L{\varepsilon_i1_{V_i}}$.} $f:=\sum_{i=1}^L{\pm1_{V_i}}$ is certainly integer-valued and it also has $\|f\|_A \leq L$.  Our aim is to prove the following sort of converse.
\begin{theorem}\label{thm.main}
Suppose that $f$ is integer-valued with $\|f\|_{A} \leq M$.  Then there are subspaces $V_1,\dots,V_L \leq G$ such that
\begin{equation*}
f=\sum_{i=1}^L{\pm 1_{V_i}} \text{ and } L \leq \exp(M^{3+o(1)}).
\end{equation*}
\end{theorem}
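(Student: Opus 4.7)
The plan is an iterative peeling argument: at each stage I will find a coset $y + V$ with $V \leq G$ of small codimension on which $f$ is identically a nonzero integer $c$, subtract $c \cdot 1_{y+V}$ from $f$, and iterate until reaching zero. Since $1_{y+V} = 1_{\langle V, y \rangle} - 1_V$ decomposes into two subspace indicators, each peeling step contributes at most $2|c|$ signed subspace indicators to the eventual list.

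The heart of the argument is a peeling lemma: if $f : G \to \Z$ is nonzero with $\|f\|_{A} \leq M$, then there exist a subspace $V \leq G$ of codimension at most $K(M) = M^{3+o(1)}$, an element $y \in G$, and a nonzero integer $c$ with $f(y + v) = c$ for every $v \in V$. To prove it, I will pick a threshold $\alpha$ and invoke a Chang-type spectral structure theorem to produce a subspace $V \subseteq \{r : |\wh{f}(r)| \geq \alpha\}^\perp$ of codimension polynomial in $M$. For any $v \in V$,
\begin{equation*}
|f(y+v) - f(y)| \leq 2\sum_{r :\,|\wh{f}(r)| < \alpha,\ r^t v = 1}{|\wh{f}(r)|},
\end{equation*}
and a Plancherel/averaging argument forces this to be strictly less than $1$ for some $y$. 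Integer-valuedness of $f$ then pins it down to be constant on $y + V$, and choosing $y$ to be a peak of $|f|$ makes the constant nonzero.

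For the bookkeeping I will track the $L^2$-mass $\sum_x{|f(x)|^2}/|G|$ as a monovariant: it begins at most $(\max_x |f(x)|)^2 \leq M^2$ and drops at each peeling by at least $c^2|V|/|G| \geq 2^{-K(M)}$. Hence the iteration terminates after at most $M^2 \cdot 2^{K(M)}$ steps, and in total $L \leq 2M \cdot M^2 \cdot 2^{K(M)} = \exp(M^{3+o(1)})$, provided the peeling lemma indeed delivers $K(M) = M^{3+o(1)}$.

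The main obstacle is the quantitative form of the peeling lemma. A soft Chang-type argument only gives $K(M) = M^{O(1)}$ with an unfavourable exponent, and cutting it down to $M^{3+o(1)}$ requires a refined spectral structure theorem in the spirit of Sanders' quantitative Bogolyubov--Ruzsa lemma. Equally delicate is arranging the subtraction so that $\|g\|_{A}$ does not blow up: if $\|g\|_{A}$ grew with each iteration, the peeling lemma applied at later stages would need a much larger codimension $K(\|g\|_A) \gg K(M)$, wrecking the bound. Choosing $V$ (and $c$) so that $\|g\|_{A} \leq M$ is maintained --- essentially by insisting that the extracted coset averages match the dominant Fourier modes of $f$ --- is the technical crux of the argument.
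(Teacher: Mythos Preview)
Your peeling lemma is false as stated.  Take $f=1_{\{0_G\}}$ on $G=\F_2^n$: then $\wh{f}\equiv |G|^{-1}$, so $\|f\|_A=1$, yet the only coset on which $f$ is a nonzero constant is $\{0_G\}$ itself, forcing $\codim V=n$.  No bound on the codimension in terms of $M$ alone is possible, and the Chang-type argument you sketch cannot give one either: for this $f$ every Fourier coefficient has the same size, so the large spectrum is either all of $\wh{G}$ or empty, and Chang's lemma returns a spanning set of size comparable to $n$.  (Relatedly, your displayed bound on $|f(y+v)-f(y)|$ does not depend on $y$, so the ``averaging over $y$'' step does nothing.)  Since the $L^2$-mass drop you use is $c^2|V|/|G|$, this example also shows your monovariant can shrink by an amount depending on $n$, not just $M$.

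The paper's iteration fixes both issues by changing what is subtracted and what is tracked.  One finds a subspace $V$ with $|V|\geq \exp(-M^{3+o(1)})|\supp f_\Z|$ (size relative to the support, not to $G$) such that $f\ast\mu_V$ is almost integer-valued, and then subtracts the whole of $f\ast\mu_V$, not a single coset.  Because $\wh{f\ast\mu_V}$ is the restriction of $\wh{f}$ to $V^\perp$, one has exactly
\[
\|f-f\ast\mu_V\|_A=\|f\|_A-\|f\ast\mu_V\|_A\leq \|f\|_A-\tfrac12,
\]
so the spectral norm is the monovariant and the loop terminates in $O(M)$ steps; your acknowledged ``technical crux'' of controlling $\|g\|_A$ simply disappears.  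The bound on $|V|/|\supp f_\Z|$ then guarantees $(f\ast\mu_V)_\Z$ is supported on at most $\exp(M^{3+o(1)})$ cosets of $V$, which is what yields the count on $L$.

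Finally, getting such a $V$ is where the real work lies and it is not a direct spectral argument.  The paper first passes from $\supp f_\Z$ to a subset with small doubling (using that $f$ is almost integer-valued and a Chebyshev-polynomial/arithmetic-connectivity argument), then applies a Freiman-type theorem to locate a subspace in which that subset is dense, and only then uses a Croot--{\L}aba--Sisask/Chang step to make $f$ nearly constant on cosets.  Your proposal collapses all of this into a single Chang step, which is why it overclaims the codimension bound.
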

This improves on the bound $L \leq \exp(\exp(O(M^4)))$ in \cite[Theorem 1.3]{gresan::}.  On the other hand if $f$ is a sum of $k$ maps of the form $x\mapsto (-1)^{r_i^tx}$ with the $r_i$s independent then $\|f\|_A =O(\sqrt{k})$ so that we certainly need $L=\Omega(M^2)$ above.

The above result includes a structure theorem for Boolean functions (with small spectral norm) as they are functions taking particular integer values, but does not address the question of which linear combinations of indicator functions of subspaces lead to Boolean functions.

Various sub-classes of the Boolean functions with small spectral norm have been studied: for example the symmetric functions in \cite[Theorem 1.1]{adafawhat::}; the low-degree functions in \cite[Lemma 4]{tsawonxie::}; the functions with small Fourier support in \cite[Theorem 1.3]{shptalvol::0}; and the functions in small\footnote{It is perhaps better to say not enormous rather than small here.} ambient group \cite[Theorem 1.2]{shptalvol::0}.

This last result is particularly worth mentioning as the dependence on the size of the ambient group is very mild.
\begin{theorem}[{\cite[Theorem 1.2]{shptalvol::0}}]\label{thm.stv} Suppose that $f$ is Boolean with $\|f\|_{A} \leq M$.  Then there are subspaces $V_1,\dots,V_L \leq G$ such that
\begin{equation*}
f=\sum_{i=1}^L{\pm 1_{V_i}} \text{ and } L \leq \exp(O(M(M+\log\log |G|))).
\end{equation*}
\end{theorem}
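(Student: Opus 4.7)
The plan is to prove Theorem~\ref{thm.stv} by an iterative peeling argument: at each stage we identify a subspace $V \le G$ and a sign $\varepsilon \in \{-1,+1\}$ such that replacing $f$ by $f - \varepsilon 1_V$ keeps it integer-valued and strictly decreases the spectral norm; we stop when nothing is left.  The $\log\log |G|$ factor will enter through the size of the heavy Fourier coefficient we can guarantee at each stage, exploiting that $f$ is \emph{Boolean}, not merely integer-valued.

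\textbf{Step 1 (Heavy coefficient from granularity, the crux).}  First I would establish: for any non-zero Boolean $f$ with $\|f\|_A \le M$, there exists $r$ with $|\wh{f}(r)| \ge \mu_0$, where $\mu_0 = \Omega(1/(M+\log\log|G|))^{O(1)}$.  The key input is that $f$ is Boolean, so $|G|\wh{f}(r) \in \Z$ and the non-zero Fourier coefficients live on a lattice.  I would partition the spectrum dyadically into $O(\log|G|)$ magnitude levels, use $\|f\|_A \le M$ to control the population of each level, and combine Parseval ($\|\wh f\|_2^2 = \E[f]$ since $f$ is Boolean) with the trivial lower bound $\E[f] \ge 1/|G|$.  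The dyadic bookkeeping forces a non-negligible fraction of the $\ell^2$-mass to sit at a level of height at least $1/\mathrm{poly}(M, \log\log|G|)$, giving $\mu_0$.  This is the one place where $|G|$ appears, and only through $\log\log|G|$.

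\textbf{Step 2 (Peeling off one subspace).}  Given a heavy frequency $r$, I would consider the coset decomposition $G = r^\perp \sqcup (r^\perp + a)$ and use that $f|_{r^\perp}$ and $f|_{r^\perp + a}$ are both Boolean on translates of a subspace of index $2$.  For an appropriate sign $\varepsilon$, subtracting $\varepsilon 1_{V}$ where $V$ is the hyperplane $r^\perp$ (or a further subspace carved out by recursing on one of the two halves) cancels the rank-one contribution of $\wh{f}(r)(-1)^{r^tx}$ while preserving integrality, and yields $\|f - \varepsilon 1_V\|_A \le \|f\|_A - \Omega(\mu_0)$.  Booleanity is what lets the residual remain integer-valued after subtracting a \emph{subspace} indicator rather than merely a character.

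\textbf{Step 3 (Iteration and counting).}  Iterating Step 2 at most $T = O(M/\mu_0) = O(M(M+\log\log|G|))$ times drives $\|f\|_A$ below $1$, forcing the remaining integer-valued function to be identically zero; the accumulated signed indicators then provide the desired decomposition.  Because Step 1 does not uniquely specify the frequency $r$, we must branch over the candidate heavy frequencies (of which there are at most $M/\mu_0$), giving a branching factor of $\exp(O(M+\log\log|G|))$ per iteration and a final bound $L \le \exp(O(M(M+\log\log|G|)))$.

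\textbf{Main obstacle.}  The hard step is clearly Step~1: one needs a heavy coefficient of inverse polylogarithmic (in $|G|$) size, rather than the trivial $1/|G|$ granularity bound.  The $\log\log|G|$ (rather than $\log|G|$) dependence is won precisely by the dyadic level-argument together with the $\ell^1$ spectral constraint, and is essentially tight for this method.  A secondary obstacle is Step~2: extracting a genuine subspace indicator (not just a character) while maintaining integrality of the residual is what forces us sometimes to recurse on one of the two hyperplane restrictions, and it is the careful interleaving of these recursions with the peeling that must be orchestrated to achieve the stated bound.
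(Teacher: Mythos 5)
You should first note that the paper does not actually prove Theorem \ref{thm.stv}: it is quoted from \cite{shptalvol::0} for context only, so there is no in-paper argument to compare against and I can only assess your proposal on its own terms.

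There is a genuine gap, and it sits exactly where you place the crux: the claim in Step 1 is false. For Boolean $f$ one has $\|\wh{f}\|_\infty^2 \leq \sum_r \wh{f}(r)^2 = \E{f}$ (Parseval together with $f^2=f$), so \emph{every} Fourier coefficient of a sparse Boolean function is small. Concretely, $f=1_V$ for a subspace $V$ with $|V|=\sqrt{|G|}$ has $\|f\|_A=1$ but $\max_r|\wh{f}(r)|=|G|^{-1/2}$; and $f=1_{V\cup W}$ for two such subspaces with small intersection has $\|f\|_A\leq 3$, is not a single signed subspace indicator, and still has all Fourier coefficients $O(|G|^{-1/3})$. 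So no lower bound of the form $\|\wh{f}\|_\infty \geq 1/\mathrm{poly}(M,\log\log|G|)$ can hold. The dyadic level argument cannot rescue this: there are $\Theta(\log|G|)$ dyadic levels between $1/|G|$ and $1$, the total $\ell^2$-mass to be distributed is $\E{f}$, and the mass at a level of height $h$ is at most $hM$, so the best one extracts is a level of height $\Omega(\E{f}/(M\log|G|))$ --- proportional to $\E{f}$, which can be as small as $1/|G|$. Any correct argument must measure heaviness \emph{relative to the density} (note $\|\wh{f}\|_\infty\geq \E{f}/M$ always holds) and locate structure relative to $\supp f$; this is precisely why Proposition \ref{prop.st} of the present paper, and the arguments of \cite{gresan::}, are phrased in terms of $\supp f_\Z$ rather than an absolute scale.

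Two further problems. In Step 2, $f-\varepsilon 1_{r^\perp}$ is integer-valued but takes values in $\{-1,0,1\}$, so it is no longer Boolean and Step 1 (which genuinely needs Booleanity) cannot be reapplied; there is also no reason for $\|f-\varepsilon 1_{r^\perp}\|_A$ to drop, since subtracting $\varepsilon/2$ from both $\wh{f}(0)$ and $\wh{f}(r)$ typically increases $|\wh{f}(0)|+|\wh{f}(r)|$ when both are small. The Boolean-preserving, norm-non-increasing operation is restriction to cosets of a subspace, and that is how the bound in \cite{shptalvol::0} actually arises: a recursion of depth $O(M)$ (the spectral norm drops by at least $\frac{1}{2}$ on each coset) with $\exp(O(M+\log\log|G|))$ cosets per level. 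Finally, your Step 3 accounting does not close even internally: $T$ rounds with multiplicative branching factor $B$ give $B^T$, and with your own $B=\exp(O(M+\log\log|G|))$ and $T=O(M(M+\log\log|G|))$ this is $\exp(O(M(M+\log\log|G|)^2))$, which exceeds the bound you are trying to prove.
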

This result is stronger than Theorem \ref{thm.main} unless $|G|\geq \exp(\exp(M^{2+o(1)}))$, though it does not cover integer-valued functions more generally.

Statements producing weaker structures (but usually with better quantitative information) have also been proved about Boolean functions with small spectral norm including \cite[Theorem 4]{gro::0}, \cite[Theorems 1.1 \&1.4 ]{shptalvol::0} and \cite[Lemmas 4 \& 8]{tsawonxie::}.

\section{Outline and conditional proof of main theorem}

The overall structure is not wildly different to that in \cite{gresan::}.  We use an induction over almost integer-valued functions, where we say that $f:G \rightarrow \R$ is \textit{$\epsilon$-almost integer-valued} if there is a function $f_\Z$ such that $\|f-f_\Z\|_\infty \leq \epsilon$.  If, as will always be the case, $\epsilon <\frac{1}{2}$ then $f_\Z$ is uniquely determined.

Given a finite non-empty set $S$ in $G$ we write $\mu_S$ for the uniform probability measure on $S$, and given a function $f$ on $G$ we then write
\begin{equation*}
f\ast \mu_S(x):=\E_{x+S}{f}\text{ for all }x\in G.
\end{equation*}
In particular, if $V \leq G$ then $1_A \ast \mu_V(x)$ is the relative density of $A$ on $x+V$.

The idea is to keep splitting $f$ up into pieces with smaller spectral norm, though they may also be less close to being integer-valued.  We do this in two parts: given $f$ we find a subspace with which it correlates.  This is done first by passing to a set with small doubling (Proposition \ref{prop.st} below), and then using a version of Freiman's theorem (Proposition \ref{prop.fre} below).  This result is discussed more in \S\ref{sec.fre}.
\begin{proposition}\label{prop.st}
There is an absolute $C>0$ such that the following holds.  Suppose that $f$ is $\epsilon$-almost integer-valued; and $\|f\|_{A} \leq M$ with $\epsilon \leq \exp(-CM)$.  Then there is a set $A \subset \supp f_\Z$ such that $|A+A| \leq \exp(O(M\log M))|A|$ and $|A| \geq \exp(-O(M\log M))|\supp f_\Z|$.
\end{proposition}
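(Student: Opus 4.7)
The plan is to find additive structure in a level set of $f_\Z$ by approximating its indicator in $L^\infty$ by a polynomial in $f$ with controlled algebra norm, then extracting a dense subset with small doubling via additive energy and the Balog--Szemer\'edi--Gowers theorem.

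Since $\|f_\Z\|_\infty \leq \|f\|_A + \epsilon \leq M+1$, the integer-valued function $f_\Z$ takes only $O(M)$ distinct nonzero values, so pigeonhole yields some $k \ne 0$ with $B_k := \{x : f_\Z(x) = k\}$ satisfying $|B_k| \geq |\supp f_\Z|/O(M)$; it thus suffices to find $A \subseteq B_k$ with the required properties. Let $P_k$ be the Lagrange interpolation polynomial of degree $2M+2$ with $P_k(j) = \delta_{jk}$ for integers $j \in [-M-1, M+1]$, so $P_k(f_\Z) = 1_{B_k}$ identically. A direct bound on the coefficients of $P_k$ (numerators are elementary symmetric polynomials in the $j$'s, denominators are of $((M+1)!)^2$ type, so Stirling collapses the ratio) gives $\|P_k(f)\|_A \leq \sum_j|a_j|M^j \leq \exp(O(M))$ by submultiplicativity of the algebra norm. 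Moreover Markov's inequality for polynomials yields $\|P_k'\|_\infty \leq \exp(O(M))$ on $[-M-1,M+1]$, which combined with $\|f - f_\Z\|_\infty \leq \epsilon \leq \exp(-CM)$ (for $C$ sufficiently large) gives $\|P_k(f) - 1_{B_k}\|_\infty \leq \exp(-\Omega(M))$.

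Set $g := P_k(f)$. Two applications of Cauchy--Schwarz on the Fourier side yield
\[
\sum_r |\wh g(r)|^4 \;\geq\; \frac{\|g\|_2^6}{\|g\|_A^2} \;\geq\; \frac{|B_k|^3}{|G|^3\exp(O(M))},
\]
where $\|g\|_2^2 \geq |B_k|/(2|G|)$ because $|g| \geq 1/2$ on $B_k$. Writing $\phi := g - 1_{B_k}$, the elementary inequality $|\wh g|^4 \leq 16(|\wh{1_{B_k}}|^4 + |\wh\phi|^4)$ combined with $\sum_r|\wh\phi|^4 \leq \|\phi\|_\infty^4$ transfers this bound to
\[
E(B_k) \;=\; |G|^3\sum_r |\wh{1_{B_k}}(r)|^4 \;\geq\; \frac{|B_k|^3}{\exp(O(M\log M))},
\]
the extra $\log M$ buffering the cross-term losses. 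Applying the Balog--Szemer\'edi--Gowers theorem then produces $A \subseteq B_k$ with $|A| \geq |B_k|/\exp(O(M\log M))$ and $|A+A| \leq \exp(O(M\log M))|A|$, and composing with the pigeonhole step gives the claimed conclusion.

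The main obstacle is the transfer from the $\wh g$-bound to an $\wh{1_{B_k}}$-bound: the error $|G|^3\|\phi\|_\infty^4$ must be absorbed into $|B_k|^3/\exp(O(M))$, which is automatic only when $|B_k|/|G| \geq \exp(-\Omega(M))$. The remaining cases split into a trivial regime---if $|\supp f_\Z| \leq \exp(O(M\log M))$ then any singleton $A \subseteq \supp f_\Z$ already satisfies the conclusion---and an intermediate regime where $\supp f_\Z$ is large in absolute size but sparse in $G$. The latter is handled either by taking $\epsilon$ smaller (within the $\exp(-CM)$ budget by enlarging $C$) or by a finer argument passing to a subgroup on which $\supp f_\Z$ becomes dense.
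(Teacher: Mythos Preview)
Your approach via Lagrange interpolation is natural, but the gap you flag in the final paragraph is genuine and not patched by the remedies you suggest.  The issue is that the energy transfer
\[
|G|^3\sum_r|\wh{1_{B_k}}(r)|^4 \;\geq\; \tfrac{1}{16}|G|^3\sum_r|\wh{g}(r)|^4 - |G|^3\|\phi\|_\infty^4
\]
requires $\|\phi\|_\infty \ll (|B_k|/|G|)^{3/4}\exp(-O(M))$, and all you control is $\|\phi\|_\infty \leq \epsilon\exp(O(M))$.  Enlarging the absolute constant $C$ in the hypothesis $\epsilon\leq\exp(-CM)$ cannot help: $C$ must not depend on $|B_k|/|G|$, and that ratio can be as small as $\exp(-n)$ for arbitrary $n$.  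The ``pass to a subgroup where $\supp f_\Z$ is dense'' idea is circular --- locating such a subgroup is essentially the structural statement you are trying to prove, and $\supp f_\Z$ may well span $G$.  The singleton trick only handles $|\supp f_\Z|\leq\exp(O(M\log M))$, leaving the entire regime $\exp(M\log M)\ll|\supp f_\Z|\ll\exp(-\Omega(M))|G|$ uncovered.

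The paper sidesteps this density dependence by never comparing fourth moments of Fourier transforms.  Instead it runs an arithmetic-connectivity argument: for $x_1,\dots,x_m\in\supp f_\Z$ it builds $\wh{h}(r)=m^{-1}\sum_i\pm(-1)^{r^tx_i}$ (so $\|h\|_{\ell_1}\leq1$) and pairs a Chebyshev polynomial $P(\wh{h})$ against $\wh{f}$.  The point is that the error $|\langle P(\wh{h}),\wh{f}-\wh{f_\Z}\rangle|\leq\|P(h)\|_{\ell_1}\|f-f_\Z\|_\infty\leq\epsilon\exp(O(l))$ is bounded \emph{uniformly} in $|G|$, because the test function lives in $\ell_1$.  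The Chebyshev polynomial is chosen so that its linear coefficient $2l+1$ dominates, forcing many odd-size subset sums of the $x_i$ to land back in $\supp f_\Z$; this yields a lower bound on $\langle 1_{\supp f_\Z}^{(2k+1)},1_{\supp f_\Z}\rangle$ of the right shape, and only then is Balog--Szemer\'edi--Gowers invoked.  The key conceptual difference is that the paper tests $f$ against $\ell_1$-bounded functions (so $\epsilon$ enters additively, density-free), whereas your argument tests $1_{B_k}$ against itself in $\ell_4$ on the Fourier side, where the error scales with $|G|$.
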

\begin{proposition}\label{prop.fre}
Suppose that $A \subset G$ has $|A+A| \leq K|A|$.  Then there is some $V \leq G$ with $|V| \geq \exp(-\log^{3+o(1)}K)|A|$ and $|A\cap V| \geq \exp(-\log^{1+o(1)}K)|V|$.
\end{proposition}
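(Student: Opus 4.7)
The plan is to derive this from Sanders's quasi-polynomial Bogolyubov--Ruzsa lemma \cite{san::12}, together with some standard Plünnecke--Ruzsa bookkeeping. The two exponents in the conclusion --- the $3+o(1)$ in the size bound for $V$ and the $1+o(1)$ in the density of $A$ on $V$ --- correspond to two distinct quantitative ingredients, and the bulk of the work goes into the first.

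By Plünnecke--Ruzsa, $|nA|\le K^n|A|$ for every $n\ge 1$; in particular $|4A|=|2A-2A|\le K^4|A|$ since we are in $\F_2^n$. The deep step is to produce a subspace $H\subseteq 2A-2A$ with $|H|\ge\exp(-\log^{3+o(1)}K)|A|$; this is exactly Sanders's quasi-polynomial Bogolyubov theorem. It is proved by iterating the Croot--Sisask $L^p$-almost-periodicity lemma to locate many approximate additive shifts of the convolution $1_A\ast 1_A$, and then running a Chang-type argument, the crucial point being that one gains a quasi-polynomial-in-$K$ control on the size of the relevant ``large spectrum'' rather than the trivial Parseval bound of $K^{O(1)}$. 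Taking annihilators of this spectrum yields $H$.

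Next I would convert ``$H\subseteq 2A-2A$'' into ``$A$ is dense on a translate of $H$''. Since $|A+H|\le|5A|\le K^5|A|$ and $|H|$ is not too much smaller than $|A|$, Ruzsa's covering lemma produces a set $T$ of size at most $|A+H|/|H|$ with $A\subseteq T+H$. A more careful set-up at the Bogolyubov stage --- extracting $H$ inside a shifted sumset chosen so that the relative doubling of $A$ and $H$ is controlled directly, rather than via the ratio $|A|/|H|$ --- arranges $|T|\le\exp(\log^{1+o(1)}K)$, so pigeonhole gives a translate $t+H$ with $|A\cap(t+H)|\ge\exp(-\log^{1+o(1)}K)|H|$. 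Replacing $A$ by the translate $A-t$ (which preserves $|A+A|$, and hence the hypothesis of the proposition) we may take $V:=H$.

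The principal obstacle is the Bogolyubov--Sanders step itself, which is the nontrivial content of \cite{san::12}; the rest is Plünnecke bookkeeping, although genuine care is needed when extracting $H$ to ensure that the covering constant $|T|$ comes out as $\exp(\log^{1+o(1)}K)$ rather than just the $\exp(\log^{3+o(1)}K)$ one would get by naively combining the Bogolyubov bound with Ruzsa covering.
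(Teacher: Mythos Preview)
Your overall strategy is sound, and in fact cleaner than the paper's own route; but your stated obstacle is a phantom, and there is one small slip at the end.

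\textbf{The ``careful set-up'' is unnecessary.} You already have $H\le G$ with $H\subset 2A-2A=4A$, so $|A+H|\le|5A|\le K^{5}|A|$. Ruzsa covering then gives $|T|\le |A+H|/|H|$, and pigeonhole yields some $t$ with
\[
|A\cap(t+H)|\;\ge\;\frac{|A|}{|T|}\;\ge\;\frac{|A|\,|H|}{|A+H|}\;\ge\;K^{-5}|H|.
\]
The ratio $|A|/|H|$ cancels; nothing about the \emph{size} of $T$ matters, only the bound $|A+H|\le K^{5}|A|$ coming from $H\subset 4A$. (Equivalently, Cauchy--Schwarz gives $\|1_A\ast\mu_H\|_\infty\ge |A|/|A+H|\ge K^{-5}$.) So the naive argument already delivers density $K^{-O(1)}$, which is \emph{stronger} than the $\exp(-\log^{1+o(1)}K)$ claimed.

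\textbf{Minor slip.} You cannot ``replace $A$ by $A-t$'': the proposition is about the given $A$. Instead take $V:=\langle H,t\rangle$, which at most doubles $|H|$ and contains $t+H$; this is exactly what the paper does at the very end.

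\textbf{Comparison with the paper.} The paper does \emph{not} invoke Bogolyubov--Ruzsa as a black box. It rebuilds it from ingredients in \cite{san::10}: first a structural result producing sets $S,T$ with $2A\subset S\subset 2rA$ for $r=\log^{o(1)}K$, then Croot--Sisask and Chang's covering lemma to obtain a subgroup $U$ of the right size. At that point the density of $A$ on $U$ is only $\exp(-\log^{3+o(1)}K)$, so the paper runs a further bootstrap: a telescoping pigeonhole finds $L=A+sX$ with $|X+L|=O(|L|)$, and Chang's spectral lemma inside $U$ produces $V\subset (16r+2)A$. The final density step is then the same Cauchy--Schwarz bound $\|1_A\ast\mu_V\|_\infty\ge |A|/|A+V|\ge K^{-O(r)}=\exp(-\log^{1+o(1)}K)$. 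Your route, quoting Bogolyubov--Ruzsa directly, lands $H$ inside $4A$ rather than $O(r)A$, and so in fact yields the sharper density $K^{-O(1)}$ with less work; the paper's longer argument is the price of being self-contained relative to the specific tools it cites.
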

These are proved in \S\ref{sec.st} and \S\ref{sec.fre} respectively.

For the second part, given this subspace we try to make $f$ behave continuously on it to restore the property of being almost integer-valued.  This is the purpose of Proposition \ref{prop.qc} and is roughly an analogue to \cite[\S3]{gresan::}.
\begin{proposition}\label{prop.qc}
Suppose that $V \leq G$; $\|f\|_A \leq M$; and $\epsilon \in (0,1]$ and $p\geq 2$ are parameters.  Then there is a subspace $U \leq V$ with $\codim_V U =O(p\epsilon^{-2}\log \epsilon^{-1})$ and
\begin{equation*}
\|f - f\ast \mu_U\|_{L_p(\mu_W)}\leq \epsilon M \text{ for all }W \in G/U.
\end{equation*}
\end{proposition}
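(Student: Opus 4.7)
The plan is to choose $U$ by annihilating the ``large'' Fourier coefficients of $f$, and then to control the remaining contribution on each coset via an $L_2$--$L_\infty$ interpolation combined with a concentration argument. Set a threshold $\eta = \Theta(\epsilon^2 M / (p \log \epsilon^{-1}))$ and let $\Lambda := \{r \in G : |\wh{f}(r)| \geq \eta\}$. Since $\|f\|_A \leq M$, $|\Lambda| \leq M/\eta = O(p\epsilon^{-2}\log\epsilon^{-1})$. Take $U := V \cap \Lambda^\perp$, so that $\codim_V U \leq \dim\langle\Lambda\rangle \leq |\Lambda|$ achieves the claimed bound.

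Since $\Lambda \subseteq U^\perp$ by construction, the ``big'' part $f_\Lambda := \sum_{r \in \Lambda}\wh{f}(r)\chi_r$ is constant on every $U$-coset, so $f_\Lambda \ast \mu_U = f_\Lambda$. Writing $g := f - f\ast\mu_U$, we therefore have $g = f_{\mathrm{sm}} - f_{\mathrm{sm}}\ast \mu_U$, where $f_{\mathrm{sm}} := f - f_\Lambda$ satisfies $\|f_{\mathrm{sm}}\|_A \leq M$ and $\|\wh{f_{\mathrm{sm}}}\|_\infty \leq \eta$; in particular $\|g\|_\infty \leq 2M$ and $\|g\|_{L_2(G)}^2 \leq \eta M$. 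On a coset $W = w + U$, for each nonzero coset $s \in G/U^\perp$ the character $\chi_r|_U$ depends only on $s = r + U^\perp$, so we may write
\begin{equation*}
g(w+u) = \sum_{s \neq 0} b_s(w)\,\chi_s(u), \qquad b_s(w) := \sum_{r \in s + U^\perp} \wh{f}(r)\,\chi_r(w).
\end{equation*}
Parseval on $U$ gives $\|g\|_{L_2(\mu_W)}^2 = \sum_{s}|b_s(w)|^2$, and combined with the interpolation $\|g\|_{L_p(\mu_W)} \leq \|g\|_{L_2(\mu_W)}^{2/p}\|g\|_\infty^{1-2/p}$, the desired $L_p$ bound reduces to a uniform-in-$w$ control on $\sum_{s}|b_s(w)|^2$ of order $(\epsilon M)^p/(2M)^{p-2}$.

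The main obstacle is to upgrade the \emph{average} bound $\E_w\sum_{s}|b_s(w)|^2 = \|g\|_{L_2(G)}^2 \leq \eta M$ to a uniform-in-$w$ one. For this one uses moment estimates: $w \mapsto b_s(w)$ is itself a character sum on $G$, and the higher even moments admit Khintchine-type bounds $\E_w |b_s(w)|^{2k} \leq C^k k!\bigl(\sum_{r \in s+U^\perp}|\wh{f}(r)|^2\bigr)^k$, yielding strong tail control via Markov. A dyadic decomposition of the residual spectrum by coefficient size into $O(\log \epsilon^{-1})$ slabs --- the source of the $\log\epsilon^{-1}$ factor --- combined with a union bound over slabs and a further refinement of $U$ killing any offending character on a still-bad coset, then converts the average control into the required uniform one. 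The choice $\eta = \Theta(\epsilon^2 M/(p\log\epsilon^{-1}))$ balances the codimension budget against the oscillation budget.
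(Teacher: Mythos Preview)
Your approach has a genuine gap at the interpolation step, and this cannot be repaired by better concentration or further refinement of $U$.

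Grant yourself the strongest possible outcome of the concentration argument, namely the uniform bound $\sum_{s}|b_s(w)|^2\le \eta M$ for \emph{every} $w$. The interpolation $\|g\|_{L_p(\mu_W)}\le \|g\|_{L_2(\mu_W)}^{2/p}\|g\|_\infty^{1-2/p}$ then gives at best
\[
\|g\|_{L_p(\mu_W)}\le (\eta M)^{1/p}(2M)^{1-2/p},
\]
and for this to be $\le \epsilon M$ you need $\eta \le \epsilon^{p}2^{2-p}M$. With your choice $\eta=\Theta(\epsilon^{2}M/(p\log\epsilon^{-1}))$ this forces $(2/\epsilon)^{p-2}=O(p\log\epsilon^{-1})$, which fails for any fixed $\epsilon<1$ once $p$ is moderately large. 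Put differently, to push the interpolation through you would need $|\Lambda|\ge M/\eta\gtrsim (2/\epsilon)^{p}$, exponential in $p$ rather than $O(p\epsilon^{-2}\log\epsilon^{-1})$. The loss is intrinsic to routing an $L_p$ bound through $L_2$ and $L_\infty$ when $\|g\|_\infty$ can be as large as $2M$.

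There is a second problem: the Khintchine-type bound $\E_w|b_s(w)|^{2k}\le C^kk!\bigl(\sum_{r\in s+U^\perp}|\wh f(r)|^2\bigr)^k$ is false in general. The sum $b_s(w)$ is over characters indexed by a coset of the \emph{subspace} $U^\perp$, and these characters need not be dissociated; for instance if $\wh f$ is large on a subgroup of $U^\perp$ then $b_s$ has no sub-Gaussian tail. So the concentration step is unjustified as stated.

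The paper avoids both issues by never passing through $L_2$. It uses a Croot--{\L}aba--Sisask sampling lemma which, given any single coset $W$, produces $U\le W$ with $\codim_W U=O(p\epsilon^{-2})$ and $\|f-f\ast\mu_U\|_{L_p(\mu_W)}\le \epsilon\|f\|_A$ directly. An iteration over bad cosets, together with a dyadic decomposition in the spectral norm (which is where the $\log\epsilon^{-1}$ factor enters), then makes the bound hold uniformly over $G/U$ while keeping the total codimension $O(p\epsilon^{-2}\log\epsilon^{-1})$. The point is that Croot--{\L}aba--Sisask already delivers $L_p$ control with the right dependence on $p$; thresholding Fourier coefficients only delivers $L_2$ control, and the gap between the two is exactly what your argument cannot bridge.
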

This is proved in \S\ref{sec.qc}.

With these in hand we produce our key iteration lemma:
\begin{lemma}\label{lem.itlem}
There is an absolute constant $C>0$ such that the following holds.  Suppose that $f$ is $\epsilon$-almost integer-valued; $\|f\|_{A} \leq M$; $\eta>0$ is a parameter and $\epsilon \leq \exp(-CM)$.  Then there is some $V \leq G$ such that $f\ast \mu_V$ is $(\epsilon + \eta)$-almost integer-valued, $(f\ast \mu_V)_\Z \not \equiv 0$ and $|V| \geq  \exp(-O(M^{2+o(1)}\max\{\log \eta^{-1},M\}))|\supp f_\Z|$.
\end{lemma}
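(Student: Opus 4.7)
The plan is to chain the three available propositions.

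First, apply Proposition~\ref{prop.st} to extract $A \subseteq \supp f_\Z$ with doubling $K = \exp(O(M\log M))$ and $|A|\geq \exp(-O(M\log M))|\supp f_\Z|$. Proposition~\ref{prop.fre} then produces $V'\leq G$ with $|V'|\geq \exp(-\log^{3+o(1)}K)|A| = \exp(-M^{3+o(1)})|A|$, inside which $A$ has density $\alpha := \exp(-\log^{1+o(1)}K) = \exp(-M^{1+o(1)})$. Finally, apply Proposition~\ref{prop.qc} to $V'$ with $\epsilon' := 1/(6M)$ and $p$ to be determined, producing $U \leq V'$ of codimension $O(p\epsilon'^{-2}\log\epsilon'^{-1}) = O(pM^{2+o(1)})$ with $\|f-f\ast\mu_U\|_{L_p(\mu_W)}\leq 1/6$ for every $W\in G/U$.

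Fix any coset $W$ and let $d_W := (f\ast\mu_U)|_W$, a constant on $W$. Since $\epsilon \leq \exp(-CM) \ll 1/6$, the triangle inequality gives $\|f_\Z - d_W\|_{L_p(\mu_W)} \leq 1/4$. Were $d_W$ at distance $>1/3$ from $\Z$, integer-valuedness of $f_\Z$ would force $|f_\Z - d_W|\geq 1/3$ pointwise on $W$, contradicting the $L_p$-bound; hence there is a nearest integer $n_W$ with $|d_W - n_W|\leq 1/3$. Chebyshev then yields $\mu_W\{y : f_\Z(y) \neq n_W\} \leq ((1/4)/(2/3))^p \leq 2^{-p}$, and since $\|f_\Z\|_\infty \leq M + \epsilon$, integrating $(f_\Z - n_W)$ against $\mu_W$ gives $|\E_{y\in W} f_\Z(y) - n_W| \leq O(M)\cdot 2^{-p}$, and hence $|d_W - n_W|\leq \epsilon + O(M)\cdot 2^{-p}$. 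Choosing $p \gtrsim \log(M/\eta)$ reduces the right-hand side to at most $\epsilon + \eta$, so $f\ast\mu_U$ is $(\epsilon+\eta)$-almost integer-valued with $\Z$-part the coset-constant function $W \mapsto n_W$.

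For the non-vanishing claim, $V'$ is a disjoint union of $U$-cosets and $|A\cap V'|/|V'|\geq \alpha$, so by averaging there is a coset $W^*\subseteq V'$ with $\mu_{W^*}(A)\geq \alpha$. If $n_{W^*} = 0$, then on $A\cap W^*$ we have $|f_\Z|\geq 1$ and hence $|f_\Z - d_{W^*}|\geq 2/3$, giving $\alpha (2/3)^p \leq \|f_\Z - d_{W^*}\|_{L_p(\mu_{W^*})}^p \leq (1/4)^p$, i.e.\ $\alpha \leq (3/8)^p$. Thus insisting $p \geq \Omega(\log\alpha^{-1}) = \Omega(M^{1+o(1)})$ rules this out and forces $n_{W^*}\neq 0$. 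Setting $p := \Theta(M^{1+o(1)} + \log\eta^{-1})$ meets both demands, gives $\codim_{V'} U = O(M^{2+o(1)}\max\{M,\log\eta^{-1}\})$, and combining with the size bounds on $V'$ and $A$ (the $M^{3+o(1)}$ loss from Proposition~\ref{prop.fre} being absorbed by the $M\cdot M^{2+o(1)}$ contribution of the codimension) yields $|U| \geq \exp(-O(M^{2+o(1)}\max\{M,\log\eta^{-1}\}))|\supp f_\Z|$. Taking $V := U$ completes the proof.

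The main obstacle is the joint calibration of $\epsilon'$ and $p$ in Proposition~\ref{prop.qc}: $\epsilon'$ must be $\Theta(1/M)$ to separate consecutive integers via the $L_p$-norm, while $p$ has to be large enough both to reduce the integer-rounding error below $\eta$ \emph{and} to detect the density $\alpha$ (so that the $\Z$-part survives), yet not so large that the resulting codimension $O(p\epsilon'^{-2}\log\epsilon'^{-1})$ overshoots the exponent budget. That the $M^{3+o(1)}$ expense from Proposition~\ref{prop.fre} and the $M^{2+o(1)}\cdot M$ codimension cost are of the same order is exactly what makes the final bound $M^{2+o(1)}\max\{M,\log\eta^{-1}\}$ clean.
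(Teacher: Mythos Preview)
Your argument is correct and follows essentially the same route as the paper's: chain Propositions~\ref{prop.st}, \ref{prop.fre}, \ref{prop.qc}, then bootstrap the coarse $L_p$-bound into the $(\epsilon+\eta)$-estimate via the small measure of $\{f_\Z\neq n_W\}$, and finally use the density of $A$ in $V'$ to force $(f\ast\mu_U)_\Z\not\equiv 0$. The differences are cosmetic (naming $V',U$ versus the paper's $U,V$; the constant $1/6$ versus $2^{-4}$; and your explicit Chebyshev phrasing versus the paper's $\|(f\ast\mu_V)_\Z-f_\Z\|_{L_p(\mu_W)}^p$ bound).
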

\begin{proof}
Apply Proposition \ref{prop.st} (possible provided $\epsilon \leq \exp(-CM)$).  This gives us a set $A \subset \supp f_\Z$ which we can put into Proposition \ref{prop.fre} to get $U \leq G$ such that
\begin{equation*}
|U| \geq \exp(-M^{3+o(1)})|\supp f_\Z| \text{ and }|A\cap U| \geq \exp(-M^{1+o(1)})|U|.
\end{equation*}
Apply Proposition \ref{prop.qc} with a parameter $p$ to be optimised and we get $V \leq G$ with $|V| \geq \exp(-O((p+M)M^{2+o(1)}))|\supp f_\Z|$ such that
\begin{equation*}
\|f - f\ast \mu_V\|_{L_p(\mu_W)} \leq 2^{-4} \text{ for all }W \in G/V.
\end{equation*}
We may assume that $\epsilon \leq 2^{-4}$ and hence by the triangle inequality that
\begin{equation*}
\|f_\Z - f\ast \mu_V\|_{L_p(\mu_W)} \leq \frac{1}{8} \text{ for all }W \in G/V.
\end{equation*}
It follows that $f\ast \mu_V$ (which is constant on cosets of $V$) is certainly $\frac{1}{8}$-almost integer-valued, but crucially this can be bootstrapped.  Suppose $W \in G/V$.  Then
\begin{equation*}
\mu_W\left(\left\{x \in W: (f \ast \mu_V)_\Z(x)\neq f_\Z(x)\right\}\right)\leq \|(f\ast \mu_V)_\Z - f_\Z\|_{L_p(\mu_W)}^p \leq \left(\epsilon + \frac{1}{8}\right)^p\leq 4^{-p}.
\end{equation*}
Writing $W=z+V$ for some $z \in G$ it follows that
\begin{align*}
|f\ast \mu_V(z)-(f \ast \mu_V)_\Z(z)| & \leq |(f - f_\Z) \ast \mu_V(z)| + |(f_\Z-(f\ast \mu_V)_\Z)\ast \mu_V(z)|\\ & \leq \epsilon + O\left(M\mu_W\left(\left\{x \in W: (f \ast \mu_V)_\Z(x)\neq f_\Z(x)\right\}\right)\right).
\end{align*}
We conclude that $f\ast \mu_V$ is $(\epsilon + O(M2^{-p}))$-almost integer-valued from our earlier estimate for the measure.  Finally, if $(f\ast \mu_V)_\Z \equiv 0$ then we have
\begin{equation*}
\mu_W\left(\left\{x \in W: 0\neq f_\Z(x)\right\}\right) \leq 2^{-p} \text{ for all }W \in G/V,
\end{equation*}
but by averaging there is some $W \in G/V$ such that $\mu_W(\supp f_\Z) \geq \mu_W(A) \geq \mu_U(A \cap U)$.  It follows that we may take $p=O(\max\{\log \mu_U(A \cap U)^{-1},\log M\eta^{-1}\})$ such that $f \ast \mu_V$ is $(\epsilon+\eta)$-almost integer-valued, and $2^{-p}<\mu_U(A\cap U)$ from which the lemma follows.
\end{proof}
The result we shall prove (from which Theorem \ref{thm.main} follows immediately) is then the following.
\begin{theorem}\label{thm.det}
There is an absolute constant $C>0$ such that the following holds.  Suppose that $f$ is $\epsilon$-almost integer-valued; $\|f\|_{A} \leq M$; and $\epsilon \leq \exp(-CM)$.  Then there are subspaces $V_1,\dots,V_L \leq G$ such that
\begin{equation*}
f_\Z=\sum_{i=1}^L{\pm 1_{V_i}} \text{ and } L \leq \exp(M^{3+o(1)}).
\end{equation*}
\end{theorem}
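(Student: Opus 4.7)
The plan is to induct on the spectral norm $M$, at each stage applying Lemma \ref{lem.itlem} to peel off a step-function on the cosets of a subspace. Apply the lemma with $\eta:=\tfrac{1}{2}\exp(-CM)$ so that $\log\eta^{-1}=O(M)$ and the size bound simplifies to $|V|\geq\exp(-O(M^{3+o(1)}))|\supp f_{\Z}|$. This yields $V\leq G$ with $f\ast\mu_V$ being $\epsilon_1$-almost integer valued for $\epsilon_1:=\epsilon+\eta\leq\exp(-CM)$ and $h:=(f\ast\mu_V)_{\Z}\not\equiv 0$. Set $g:=f-f\ast\mu_V$. The Fourier supports of $f\ast\mu_V$ and $g$ partition that of $f$ into $V^\perp$ and its complement, so $\|f\|_A=\|f\ast\mu_V\|_A+\|g\|_A$; and since $h$ is a non-zero integer-valued function we have $\|f\ast\mu_V\|_\infty\geq 1-\epsilon_1\geq\tfrac{1}{2}$, which together with $\|\cdot\|_A\geq\|\cdot\|_\infty$ gives $\|g\|_A\leq M-\tfrac{1}{2}$.

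Since both $f$ and $f\ast\mu_V$ have almost-integer errors well below $\tfrac{1}{2}$, one has $g_{\Z}=f_{\Z}-h$; and for $C$ large enough the tolerance $\epsilon+\epsilon_1$ of $g$ fits the hypothesis $\leq\exp(-C(M-\tfrac{1}{2}))$ of Theorem \ref{thm.det} at parameter $M-\tfrac{1}{2}$, so induction writes $g_{\Z}$ as a signed sum of at most $\exp((M-\tfrac{1}{2})^{3+o(1)})$ subspace indicators. It remains to express $h$ itself in the same form. Being integer valued and constant on cosets of $V$, we have $h=\sum_W c_W 1_W$ with $c_W\in\Z$ and $|c_W|\leq\|f\|_\infty+\epsilon_1\leq M+1$. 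The bootstrap $\mu_W(\{f_{\Z}\neq h\})\leq 4^{-p}$ (with $p$ of order $M^{1+o(1)}$) established inside the proof of Lemma \ref{lem.itlem} says that each coset $W$ with $c_W\neq 0$ has $f_{\Z}\equiv c_W$ on at least $(1-4^{-p})|V|$ of its points, all of which lie in $\supp f_{\Z}$, so the number of contributing cosets is at most $2|\supp f_{\Z}|/|V|\leq\exp(O(M^{3+o(1)}))$. Each coset indicator $1_{z+V}$ equals $1_{V+\langle z\rangle}-1_V$ when $z\notin V$, and $1_V$ when $z\in V$, so altogether $h$ is a signed sum of at most $(M+1)\cdot 2\cdot\exp(O(M^{3+o(1)}))=\exp(M^{3+o(1)})$ subspace indicators, and the same then holds for $f_{\Z}=h+g_{\Z}$.

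The base case is immediate: if $\|f\|_A<\tfrac{1}{2}$ then $\|f_{\Z}\|_\infty<1$, forcing $f_{\Z}\equiv 0$. The induction descends at most $2M$ times before reaching this, and the sum $\sum_{k=0}^{2M}\exp((M-k/2)^{3+o(1)})$ still lies in $\exp(M^{3+o(1)})$ after folding the polynomial factor $2M$ into the $o(1)$. The main technical obstacle I expect is propagating the almost-integer tolerance through the descent: the error $\epsilon$ roughly doubles at each step, so one must start with $\epsilon\leq\exp(-CM)$ for $C$ large enough that the hypothesis $\epsilon_k\leq\exp(-CM_k)$ is preserved throughout the $O(M)$-step descent. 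This works out cleanly because $M_k$ also drops by $\tfrac{1}{2}$ at each stage, and the budget $\exp(CM_k-CM_{k+1})=\exp(C/2)$ absorbs the doubling provided $C$ is taken sufficiently large.
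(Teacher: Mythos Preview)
Your argument is correct and follows essentially the same route as the paper: apply Lemma~\ref{lem.itlem}, peel off $h=(f\ast\mu_V)_\Z$ as a short signed sum of coset indicators, and repeat on $g=f-f\ast\mu_V$ whose spectral norm has dropped by at least $\tfrac12$, terminating in $O(M)$ steps. The paper phrases this as an explicit iteration with a single fixed $\eta=4^{-2M-3}\exp(-CM)$ rather than your inductive descent with step-dependent $\eta$, and it records the coset-count step as the inequality $|\supp (f_i\ast\mu_{V_{i+1}})_\Z|\le 2|\supp (f_i)_\Z|$ (which rests on the same $4^{-p}$ bootstrap you cite from inside the lemma's proof); these are cosmetic differences. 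One small slip: with $\eta=\tfrac12\exp(-CM)$ and $\epsilon\le\exp(-CM)$ you only get $\epsilon_1\le\tfrac32\exp(-CM)$, not $\epsilon_1\le\exp(-CM)$, but your third paragraph handles the tolerance propagation correctly regardless.
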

\begin{proof}
Let $C>0$ be the absolute constant in the statement of Lemma \ref{lem.itlem}.  Let $\epsilon_i:=2^i\epsilon + 4^{i-2M-4}\exp(-CM)$.  We shall define functions $f_i$ such that
\begin{equation*}
f_i \text{ is $\epsilon_i$-almost integer-valued, } \|f_{i+1}\|_{A} \leq\|f_i\|_A - \frac{1}{2},
\end{equation*}
and so that $(f_i-f_{i+1})_\Z$ can be written as a $\pm1$ sum of at most $\exp(M^{3+o(1)})$ cosets of a subspace $V_{i+1}$.  We set $f_0:=f$ which is certainly $\epsilon_0$-almost integer-valued.  At stage $i \leq 2M+1$ apply Lemma \ref{lem.itlem} with $\eta=4^{-2M-3}\exp(-CM)$ which is possible provided $\epsilon \leq \exp(-C'M)$.  We get $V_{i+1} \leq G$ with
\begin{equation*}
|V_{i+1}| \geq \exp(-M^{3+o(1)})|\supp (f_i)_\Z| \text{ and } f_i \ast \mu_{V_{i+1}} \text{ is }(\epsilon_i+\eta)-\text{almost integer-valued}.
\end{equation*}
Put $f_{i+1}:=f_i - f_i \ast \mu_{V_{i+1}}$.  Then $f_{i+1}$ is $2\epsilon_i + \eta \leq \epsilon_{i+1}$ almost integer-valued.  Moreover, since
\begin{equation*}
|\supp (f_i\ast \mu_{V_{i+1}})_\Z| \leq 2|\supp (f_i)_\Z|
\end{equation*}
and $(f_i\ast \mu_{V_{i+1}})_\Z$ is invariant on cosets of $V_{i+1}$ it follows from the lower bound on $|V_{i+1}|$ that $(f_i\ast \mu_{V_{i+1}})_\Z$ takes non-zero integer values on at most $\exp(M^{3+o(1)})$ translates of $V_{i+1}$.  Added to this, the value of $(f_i\ast \mu_{V_{i+1}})_\Z$ on each of these is an integer between $-(M+1)$ and $(M+1)$.  It follows that $(f_i-f_{i+1})_\Z=(f_i\ast \mu_{V_{i+1}})_\Z$ can be written as a $\pm1$ sum of at most $\exp(M^{3+o(1)})$ cosets of $V_{i+1}$.

Finally, since $(f_i\ast \mu_{V_{i+1}})_\Z$ is not identically $0$ it follows that $\|f_i\ast \mu_{V_{i+1}}\|_A \geq 1-\epsilon_{i+1} \geq \frac{1}{2}$ and hence $ \|f_{i+1}\|_{A} =\|f_i\|_A -\|f_i\ast \mu_{V_{i+1}}\|_A \leq\|f_i\|_A - \frac{1}{2}$.  In view of this the iteration terminates in $2M$ steps and unpacking what that means we have the result.
\end{proof}
The $3+o(1)$ in Theorem \ref{thm.det} arises at two different points.  The first is in the application of Freiman's theorem.  While we do not know how to improve that result, in this case there is a lot more structural information available to us in the proof of Proposition \ref{prop.st} and better bounds can be achieved in this setting (at the expense of the wider applicability of the result).

The second is in Proposition \ref{prop.qc} where, at least for fixed $p$ (\emph{e.g.} $p=2$) it is unclear how to improve the dependencies given the example of $f$ being a sum of maps of the form $x\mapsto (-1)^{r_i^tx}$ where the $r_i$s are independent.

One of the key purposes of this note is to help with the understanding of \cite{san::12}.  Inevitably that paper is rather more complicated but we have followed its overall structure closely.  Roughly speaking Lemma \ref{lem.itlem} corresponds to \cite[Lemma 10.2]{san::12}, Proposition \ref{prop.qc} to \cite[Proposition 7.1]{san::12}, and  Proposition \ref{prop.fre} to \cite[Proposition 8.1]{san::12}; the least similar is Proposition \ref{prop.st} which corresponds to a combination of \cite[Lemma 9.1]{san::12}, \cite[Proposition 9.2]{san::12} (and the Balog-Szemer{\'e}di-Gowers lemma).

\section{Quantitative continuity}\label{sec.qc}

In this section we shall prove the following
\begin{proposition*}[Proposition \ref{prop.qc}]
Suppose that $V \leq G$; $\|f\|_A \leq M$; and $\epsilon \in (0,1]$ and $p\geq 2$ are parameters.  Then there is a subspace $U \leq V$ with $\codim_V U =O(p\epsilon^{-2}\log \epsilon^{-1})$ and
\begin{equation*}
\|f - f\ast \mu_U\|_{L_p(\mu_W)}\leq \epsilon M \text{ for all }W \in G/U.
\end{equation*}
\end{proposition*}
To prove this we shall need the following corollary of work of Croot, {\L}aba and Sisask \cite{croabasis::}.  One could also proceed using Chang's Lemma \cite[Lemma 4.35]{taovu::}.  We have chosen the former approach because it is closer to the argument in \cite[\S7]{san::12}, where the appropriate localisation of Chang's Lemma would be more involved.
\begin{lemma}\label{lem.csl}
Suppose that $V \leq G$, $f \in A$ and $\epsilon \in (0,1]$, $p\geq 2$ are parameters.  Then there is a subspace $U \leq V$ with $\codim_V U = O(p\epsilon^{-2})$ such that
\begin{equation*}
\|f\ast \mu_U-f\|_{L_p(\mu_V)} \leq \epsilon \|f\|_{A}.
\end{equation*}
\end{lemma}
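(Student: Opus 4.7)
The plan is to deduce Lemma \ref{lem.csl} via the Fourier-sampling formulation of Croot--{\L}aba--Sisask: I will produce a subspace $U\le V$ with $\codim_V U=O(p\epsilon^{-2})$ such that every $u\in U$ is an $L_p(\mu_V)$-almost-period of $f$, i.e.
\begin{equation*}
\|\tau_u f - f\|_{L_p(\mu_V)}\le \tfrac12\epsilon \|f\|_A \text{ for all }u\in U,
\end{equation*}
where $\tau_u g(x):=g(x+u)$. Granted this, in characteristic $2$ we have $f\ast \mu_U(x)-f(x)=\E_{u\in U}(\tau_u f(x)-f(x))$, and since $u\in U\le V$ means $\tau_u$ is an isometry of $L_p(\mu_V)$, Minkowski's inequality gives $\|f\ast \mu_U-f\|_{L_p(\mu_V)}\le \max_{u\in U}\|\tau_u f-f\|_{L_p(\mu_V)}\le \tfrac12\epsilon\|f\|_A$, which is even stronger than required.

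For the almost-periodicity statement, write $M:=\|f\|_A$ and let $\mu$ be the probability measure on $G$ defined by $\mu(\{r\}):=|\wh f(r)|/M$. Sample $r_1,\dots,r_k$ i.i.d.\ from $\mu$ with $k=\lceil C p\epsilon^{-2}\rceil$, and form the unbiased proxy
\begin{equation*}
\tilde f(x):=\frac{M}{k}\sum_{i=1}^k \operatorname{sgn}(\wh f(r_i))(-1)^{r_i^tx},
\end{equation*}
which satisfies $\E\tilde f(x)=f(x)$ pointwise. A pointwise Khintchine/Marcinkiewicz--Zygmund estimate gives $\E|\tilde f(x)-f(x)|^p\le (C'\sqrt{p/k}\,M)^p$; integrating against $\mu_V$ and applying Markov's inequality produces a realisation of the samples for which $\|\tilde f-f\|_{L_p(\mu_V)}\le \tfrac14\epsilon M$. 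Now set
\begin{equation*}
U:=\{r_1,\dots,r_k\}^\perp\cap V,
\end{equation*}
a subspace of $V$ with $\codim_V U\le k=O(p\epsilon^{-2})$. Each character $x\mapsto (-1)^{r_i^tx}$ is $U$-invariant by construction, so $\tau_u\tilde f=\tilde f$ identically for $u\in U$. The triangle inequality applied to $\tau_u f-f=\tau_u(f-\tilde f)+(\tau_u\tilde f-\tilde f)+(\tilde f-f)$, together with the isometry of $\tau_u$ on $L_p(\mu_V)$ and the vanishing of the middle term, then yields $\|\tau_u f-f\|_{L_p(\mu_V)}\le 2\|\tilde f-f\|_{L_p(\mu_V)}\le\tfrac12\epsilon M$ for every $u\in U$, as required.

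There is no real obstacle once one is in possession of the Fourier-sampling idea; the main thing to get right is to apply Khintchine \emph{pointwise} and then integrate against $\mu_V$, so that a single random realisation works uniformly over $V$ and the codimension bound localises without degradation. The fact that $\{r_1,\dots,r_k\}^\perp$ is literally a subspace of $\F_2^n$ means that no intermediate Bohr-to-subspace extraction is needed, which is precisely the reason---as noted before the statement of Lemma \ref{lem.csl}---that this route is cleaner than the Chang's lemma alternative.
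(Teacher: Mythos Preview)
Your argument is correct and follows essentially the same route as the paper: produce a short character-sum approximant to $f$ in $L_p(\mu_V)$ via Fourier sampling, take $U$ to be $V$ intersected with the joint kernel of the sampled characters, and conclude by the triangle inequality using $U$-invariance of the approximant. The only difference is presentational---the paper invokes \cite[Corollary 3.6]{croabasis::} as a black box for the sampling step and applies the triangle inequality directly to $f\ast\mu_U-f$, whereas you unfold the sampling argument and route through almost-periodicity before averaging; these are the same argument.
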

\begin{proof}
Apply \cite[Corollary 3.6]{croabasis::} to $f$ to get some $k = O(p\epsilon^{-2})$, $r_1,\dots,r_k \in G$, and complex numbers of unit modulus $\omega_1,\dots,\omega_k$ such that\footnote{Here $(-1)^{r_i^t\cdot}$ denotes the function $x\mapsto (-1)^{r_i^tx}$.}
\begin{equation*}
\left\|f - \frac{\|f\|_A}{k}\left(\omega_1(-1)^{r_1^t\cdot}+\cdots + \omega_k(-1)^{r_k^t\cdot}\right)\right\|_{L_p(\mu_V)} \leq \epsilon\|f\|_A.
\end{equation*}
Let $U:=V\cap \bigcap_{i=1}^k{\{x:r_i^tx=0\}}$ and note that the given sum is invariant under translation by elements of $U$.  The result follows by the triangle inequality on rescaling $\epsilon$.
\end{proof}
\begin{proof}[Proof of Proposition \ref{prop.qc}]
We produce subspaces $U_i \leq V$ iteratively; initialise with $U_0:=V$.  At stage $i$ suppose that 
\begin{equation}\label{eqn.wantfalse}
\|f - f\ast \mu_{U_i}\|_{L_p(\mu_W)}>  \epsilon \|f\|_{A} \text{ for some }W \in G/U_i.
\end{equation}
By translation we may suppose that $W=U_i$. Apply Lemma \ref{lem.csl} to $f$ with parameter $\epsilon_0:=\frac{1}{2}\epsilon$ to get a space $Z_0\leq U_i$ with
\begin{equation*}
\codim_{U_i}Z_0 = O(\epsilon^{-2}p) \text{ and }\|f\ast \mu_{Z_0} - f\|_{L_p(\mu_U)} \leq \frac{1}{2}\epsilon \|f\|_A.
\end{equation*}
By the triangle inequality we have
\begin{equation}\label{eqn.lower}
\frac{1}{2}\epsilon\|f\|_A < \|f - f\ast \mu_{U_i}\|_{L_p(\mu_{U_i})} - \|f - f\ast \mu_{Z_0}\|_{L_p(\mu_{U_i})} \leq \| f\ast \mu_{U_i}- f\ast \mu_{Z_0}\|_{L_p(\mu_{U_i})}.
\end{equation}
At this point we might use a simply upper bound this last term by the spectral norm and find that we have a large $\ell_1$-mass of $\wh{f}$ on $Z_0^\perp\setminus U_i^\perp$.  This could then be iterated.  We do a little bit better by a dyadic decomposition.\footnote{This is the same idea as is discussed after \cite[Lemma 4.1]{grekon::} where a power of $\frac{1}{4}$ is improved to $\frac{1}{3}$.}

Let $\epsilon_j:=2^{j-1}\epsilon$ and at stage $j>0$ let $Z_j \leq Z_{j+1} \leq U_i$ be a space of minimal codimension such that
\begin{equation*}
\|f\ast \mu_{Z_{j}} - f\ast \mu_{Z_{j+1}}\|_{L_p(\mu_{U_i})} \leq \epsilon_{j+1} \|f\ast \mu_{Z_j^\perp}\|_{A}.
\end{equation*}
By Lemma \ref{lem.csl} applied to $f \ast \mu_{Z_{j}}$ with parameter $\epsilon_{j+1}$ we get a space $Y_{j+1} \leq U_i$ with
\begin{equation*}
\codim_{U_i}Y_{j+1} =O(\epsilon_{j+1}^{-2}p) \text{ and }\|f\ast \mu_{Z_{j}} - f\ast \mu_{Z_j}\ast \mu_{Y_{j+1}}\|_{L_p(\mu_{U_i})} \leq \epsilon_{j+1} \|f\ast \mu_{Z_j^\perp}\|_{A}.
\end{equation*}
It follows that we can take $Z_{j+1}=Z_j+Y_{j+1}$ and have $\codim_{U_i} Z_{j+1} \leq \codim_{U_i} Y_{j+1}$ and $Z_0 \leq Z_1 \leq\cdots $.  On the other hand if $J=\lceil \log\epsilon^{-1}\rceil+2$ then $\epsilon_J \geq 2$ and we can certainly take $Z_J=U_i$ by the triangle inequality.

By the triangle inequality it follows that
\begin{equation*}
\|f\ast \mu_{U_i} - f\ast \mu_{Z_0}\|_{L_p(\mu_{U_i})} \leq \sum_{j=0}^{J}{\|f\ast \mu_{Z_{j}} - f\ast \mu_{Z_{j+1}}\|_{L_p(\mu_{U_i})}} \leq \sum_{j=0}^J{ \epsilon_{j+1} \|f\ast \mu_{Z_j}\|_{A}},
\end{equation*}
and so by averaging and (\ref{eqn.lower}) there is some $j$ such that
\begin{equation*}
\frac{1}{2(J+1)}\epsilon\|f\|_A \leq\epsilon_{j+1} \|f\ast \mu_{Z_j}\|_{A}.
\end{equation*}
But then $\codim_{U_i}Z_j = O(\epsilon_{j+1}^{-2}p)$; set $U_{i+1}:=Z_j$.

Returning to our main iteration, and writing $d_{i+1}:=\codim_{U_i}U_{i+1}$ we then have
\begin{equation*}
1 \leq d_{i+1}=O(\epsilon_{j+1}^{-2}p) \text{ and }\|f\ast \mu_{U_{i+1}} - f \ast \mu_{U_i}\|_A =\Omega \left(\frac{\epsilon \|f\|_A}{\log \epsilon^{-1}}\sqrt{\frac{d_{i+1}}{p}}\right).
\end{equation*}
Since $U_0 \geq U_1 \geq U_2\geq \dots$ we have
\begin{equation*}
\|f\|_A \geq \sum_{i}{\|f\ast \mu_{U_{i+1}} - f \ast \mu_{U_i}\|_A} = \sum_i{\Omega \left(\frac{\epsilon \|f\|_A}{\log \epsilon^{-1}}\sqrt{\frac{d_{i+1}}{p}}\right)},
\end{equation*}
and since $d_i\geq 1$ for all $i>0$ this sum must involve a finite number of summands and the iteration must terminate with the failure of (\ref{eqn.wantfalse}) -- exactly the conclusion we want.  But then
\begin{equation*}
\codim_VU \leq \sum_i{d_i} \leq  \left(\sup_i{\sqrt{d_i}}\right)\sum_i{\sqrt{d_i}} = O(\epsilon^{-1}\sqrt{p})\cdot O(\epsilon^{-1}\sqrt{p}\log \epsilon^{-1}).
\end{equation*}
This gives the result.
\end{proof}

\section{From small Spectral norm to small doubling}\label{sec.st}

Throughout this section it is most natural to use counting measure, and for convenience if $f \in \ell_1(G)$ and $r \in \N$ we write
\begin{equation*}
f^{(r)}(x):=\sum_{x_1+\cdots + x_r=x}{f(x_1)\cdots f(x_r)} \text{ and } f^{(0)}:=1_{\{0_G\}}.
\end{equation*}
The key result of the section is the following.  It essentially uses a refined version of arithmetic connectivity \cite[Definition 5.2]{gresan::0} and is closely related to ideas of M{\'e}la from \cite{mel::}.
\begin{proposition*}[Proposition \ref{prop.st}]
There is an absolute $C>0$ such that the following holds.  Suppose that $f$ is $\epsilon$-almost integer-valued; and $\|f\|_{A} \leq M$ with $\epsilon \leq \exp(-CM)$.  Then there is a set $A \subset \supp f_\Z$ such that $|A+A| \leq \exp(O(M\log M))|A|$ and $|A| \geq \exp(-O(M\log M))|\supp f_\Z|$.
\end{proposition*}
\begin{proof}
Let $R:=\supp f_{\Z}$ and take $l$ and $m$ to be natural numbers to be chosen shortly.  Suppose that there is some $x \in R^m$ such that for any $S \subset [m]$ with $2 \leq |S| \leq l$ odd we have $f_\Z(\sum_{s \in S}{x_s})=0$. For $1 \leq i \leq m$ put $\omega_i:=\sgn(f_{\Z}(x_i))$ and define $h$ by $\wh{h}(r)=\frac{1}{m} \sum_{j=1}^m{\omega_i(-1)^{r^tx_j}}$ so that $\|h\|_{\ell_1(G)} \leq 1$.

Then for every $1 \leq k \leq l$ we have
\begin{align*}
\left|\langle \wh{h}^{2k+1},\wh{f_{\Z}}\rangle_{L_2(\mu_G)}\right| & = m^{-(2k+1)}\left|\E_r{\left(\sum_{i=1}^m{\omega_i(-1)^{r^tx_i}}\right)^{2k+1}\overline{\wh{f_\Z}(r)}}\right|\\
& \leq m^{-(2k+1)} \sum_{\sigma\in [m]^{2k+1}}{|f_{\Z}(x_{\sigma_1}+\cdots + x_{\sigma_{2k+1}})|}\\
& \leq m^{-(2k+1)}\sum_{\substack{\sigma\in [m]^{2k+1}\\ |\{\sigma_i: i \in [m]\}| \leq k+1}}{\|f_{\Z}\|_{\ell_\infty(G)}}\\ & \leq m^{-(2k+1)}\cdot (M+1)\cdot (2k+1) \cdot m \cdot O(mk)^k=Mm^{-k}O(k)^{k+1}.
\end{align*}
Moreover, by Young's inequality $\|h^{(2k+1)}\|_{\ell_1(G)} \leq 1$ and so by Plancherel's theorem we see that
\begin{align*}
\left|\langle \wh{h}^{2k+1},\wh{f_\Z}\rangle_{L_2(\mu_G)} - \langle \wh{h}^{2k+1},\wh{f}\rangle_{L_2(\mu_G)}\right| &  = \left|\langle h^{(2k+1)} ,f_\Z-f\rangle_{\ell_2(G)}\right|  \leq \|f-f_{\Z}\|_{L_\infty(G)} \leq \epsilon
\end{align*}
for all $0 \leq k \leq l$.

Let $P(X)=a_1X+a_3X^3+\dots + a_{2l+1}X^{2l+1}$ be the Chebychev polynomial (of the first kind\footnote{See \cite[S6.10.6]{zwikraros::} for details.}) of degree $2l+1$ so that $a_{2r+1}=(-1)^{l-r}2^{2r+1}\binom{l+r+1}{l-r}$ and $\|P\|_{L_\infty([-1,1])} \leq 1$.
Note that
\begin{equation*}
\left|\langle P(\wh{h}),\wh{f}-\wh{f_\Z}\rangle_{L_2(\mu_G)}\right| \leq\epsilon\sum_{i}{|a_i|} \leq  \epsilon \sum_{k=1}^l{2^{2k+1}\binom{l+k+1}{l-k}} \leq \epsilon \exp(O(l)).
\end{equation*}
But we also have
\begin{align*}
|\langle P(\wh{h}),\wh{f_\Z}\rangle_{L_2(\mu_G)}|
& \geq (2l+1)|\langle \wh{h},\wh{f_{\Z}}\rangle_{L_2(\mu_G)}| - \sum_{k=1}^l{|a_{2k+1}||\langle \wh{h}^{2r+1},\wh{f_{\Z}}\rangle_{L_2(\mu_G)}|}\\
& \geq (2l+1) - M\sum_{k=1}^l{\binom{l+k+1}{l-k}O(k)^{k+1}m^{-k}}\\
& \geq (2l+1)- MO\left(\frac{l^3}{m}\exp(O(l^2/m))\right).
\end{align*}
Since $-1 \leq \wh{h}(r) \leq 1$ we have $|P(\wh{h})| \leq 1$, and so $|\langle P(\wh{h}),\wh{f}\rangle_{L_2(\mu_G)}| \leq  M$.  Combining the results so far using the triangle inequality gives
\begin{equation*}
M \geq (2l+1)- MO\left(\frac{l^3}{m}\exp(O(l^2/m))\right) - \epsilon \exp(O(l)).
\end{equation*}
It follows that if $\epsilon \leq \exp(C_1l)$ for some sufficiently large $C_1>0$ and $m \geq C_2l^3$ for some sufficiently large $C_2>0$ then for $l=C_3M$ we obtain a contradiction, and the supposition at the start of the proof does not hold.  In view of this we see that
\begin{align*}
 |T|^{m} & \leq \sum_{x \in T^{m}}{\sum_{\substack{S \subset [m] \\ 2 \leq |S| \leq 2l+1\\ |S| \equiv 1 \pmod 2}}{1_R\left(\sum_{s \in S}{x_S}\right)}} \\
 & =  \sum_{x \in T^{m}}{\sum_{k=1}^l{\sum_{\substack{S \subset [m] \\ |S|=2k+1}}{1_R\left(\sum_{s \in S}{x_S}\right)}}}= \sum_{r=1}^l{\binom{m}{2k+1}\langle 1_{T}^{(2k+1)},1_R \rangle_{\ell_2(G)}|T|^{m-(2k+1)}}.
\end{align*}
By averaging there is some $1 \leq k \leq l$ such that
\begin{equation*}
1_{T}^{(4k+2)}(0_G)^{1/2} |R|^{1/2} \geq \langle 1_{T}^{(2k+1)},1_R\rangle_{\ell_2(G)} \geq \frac{1}{m\binom{m}{2k+1}}|R|^{2k+1}.
\end{equation*}
Since $1_{T}^{(4k+2)}(0_G) \leq |T|^{4k-2}1_T^{(4)}(0_G)$ we can apply the Balog-Szemer{\'e}di-Gowers \cite[Theorem 2.29]{taovu::} and we are done.
\end{proof}

\section{A variant of {Freiman's} theorem}\label{sec.fre}

In this section we prove the following result which may be of independent interest.
\begin{proposition*}[Proposition \ref{prop.fre}]
Suppose that $A \subset G$ has $|A+A| \leq K|A|$.  Then there is some $V \leq G$ with $|V| \geq \exp(-\log^{3+o(1)}K)|A|$ and $|A\cap V| \geq \exp(-\log^{1+o(1)}K)|V|$.
\end{proposition*}
It may be worth taking a moment to compare this with existing work.  We consider results of the following form: if $|A+A| \leq K|A|$ then there is a subspace $V$ with $\alpha:=|A\cap V|/|V|$ and $\delta:=|V|/|A|$.

\cite[Theorem 1.4]{san::10} gives $\alpha,\delta \geq \exp(-\log^{3+o(1)}K)$ which is not enough for our purposes.  If one wanted $\alpha \geq K^{-O(1)}$ then this is guaranteed by \cite[Theorem A.1]{san::00}) though only with $\delta \geq \exp(-O(\log^4K))$.  Indeed, it actually seems likely that those methods could be used to get $\alpha = (1-\eta)K^{-1}$ and $\delta \geq \exp(-O_\eta(\log^4K))$; this is close to optimal for $\alpha$.

It is a celebrated conjecture of Marton \cite{ruz::01} called the Polynomial Freiman-Ruzsa conjecture \cite[Conjecture 5.34]{taovu::} that we can take $\alpha,\delta \geq K^{-O(1)}$.

\begin{proof}[Proof of Proposition \ref{prop.fre}]
Apply \cite[Proposition 8.5]{san::10} to get $r = \log^{o(1)}K$ and sets $S$ and $T$ with 
\begin{equation*}
2A \subset S \subset 2rA, |S+T| = O(|S|) \text{ and } |T| \geq \exp(-\log^{3+o(1)}K)|S|.
\end{equation*}
By Pl{\"u}nnecke's inequality we have $|2S| \leq \exp(\log^{1+o(1)}K)|S|$.  Apply the Croot-Sisask Lemma (for example, in the form \cite[Proposition 8.3]{san::10}) with a parameter $m$ to be optimised shortly to get a set $X$ with
\begin{equation*}
|X| \geq \exp(-m^2\log^{1+o(1)}K)|T| \text{ and }mX \subset 4S.
\end{equation*}
Let $l$ be a further parameter to be optimised shortly and note by Pl{\"u}nnecke's inequality again that
\begin{align*}
|mlX| \leq |4lS| &\leq \exp(l\log^{1+o(1)}K)|S|\\ & \leq \exp(l\log^{1+o(1)}K + m^2\log^{1+o(1)}K + \log^{3+o(1)}K)|X|.
\end{align*}
Put $3k+1=ml$ and see that for $K$ sufficiently large we can choose $m=\log^{1+o(1)}K$ (and with $m\geq r\log K$) and $l=\log^{2+o(1)}K$ such that $|(3k+1)X| < 2^k|X|$.  We apply Chang's covering lemma \cite[Lemma 5.2]{san::10} to get a set $T\subset X$ of size $k=\log^{3+o(1)}K$ such that $3X \subset \langle T\rangle +2X$.  It follows that $U:=(k+2)X$ is a group and $\mu_U(X) > 2^{-k} \geq \exp(-\log^{3+o(1)}K)$.

So far the argument is essentially the same as the proof of \cite[Proposition 2.5]{san::10}, and if we stopped here then we would have the conclusion but with a weaker bound on the relative density of $A$ in $V$.  We shall bootstrap what we have into the better result by using the argument behind Chang's theorem \cite[Theorem 4.41]{taovu::} for two different sets.  First, note that
\begin{equation*}
\prod_{s=0}^{m-1}{\frac{|X+sX + A|}{|sX+A|}} \leq \frac{|mX+A|}{|A|} \leq \frac{|(8r+1)A|}{|A|} \leq K^{O(r)},
\end{equation*}
and so by the the pigeon-hole principle there is a set $L=A+sX$ for some $0 \leq s \leq m-1$ such that $|X+L| \leq K^{O(rm^{-1})}|L| = O(|L|)$. (Here we use that $m\geq r\log K$.)  Since the cosets of $U$ partition $G$, by averaging there is some translate of $U$, call it $W$, such that $|X+(W\cap L)| =O(|W\cap L|)$; put
\begin{equation*}
B:=L\cap W \text{ and }D:=\frac{|B+X|}{|B|} = O(1).
\end{equation*}
We may translate so that $W=U$ and now work with the Fourier transform on $U$.  By Chang's Lemma \cite[Lemma 4.35]{taovu::} there is a set $\Lambda \subset \wh{U}$ with
\begin{equation*}
|\Lambda|=O(D\log \mu_U(X)^{-1})=O(\log^{3+o(1)}K) \text{ and }\Spec_{\frac{1}{2\sqrt{D}}}(1_X) \subset \langle \Lambda \rangle.
\end{equation*}
It follows that if $x \in \Spec_{\frac{1}{2\sqrt{D}}}(1_X)^\perp$ (\emph{i.e.} $r^tx=0$ for all $r \in \Spec_{\frac{1}{2\sqrt{D}}}(1_X)$) then
\begin{align*}
|1_B \ast 1_B \ast 1_X \ast 1_X (x)-1_B \ast 1_B \ast 1_X \ast 1_X(0_G)| & = \left|\sum_{r }{|\wh{1_B}(r)|^2|\wh{1_X}(r)|^2((-1)^{r^tx}-1)}\right|\\ & \leq 2\sum_{r \not\in\Spec_{\frac{1}{2\sqrt{D}}}(1_X)}{|\wh{1_B}(r)|^2|\wh{1_X}(r)|^2}\\ & \leq \mu_U(B)\frac{1}{2D}\mu_U(X)^2,
\end{align*}
by Parseval's theorem.  On the other hand, by the Cauchy-Schwarz inequality we have
\begin{equation*}
1_B \ast 1_B \ast 1_X \ast 1_X(0_G)\geq \frac{(\mu_U(B)\mu_U(X))^2}{\mu_U(X+B)} \geq \frac{1}{D}\mu_U(B)\mu_U(X)^2.
\end{equation*}
We conclude that $V:=\langle \Lambda \rangle^\perp \subset B+B+X+X \subset 2mX+2A \subset (16r+2)A$.  Finally, we have
\begin{equation*}
\|1_A \ast \mu_V\|_{L_\infty(G)}\mu_G(A) \geq \|1_A \ast \mu_V\|_{L_2(G)}^2 \geq \frac{\mu_G(A)^2}{\mu_G(A+V)} \geq \mu_G(A)K^{-O(r)},
\end{equation*}
by the Cauchy-Schwarz and Pl{\"u}nnecke inequalities.  It follows that $|A\cap (x+V)| \geq K^{O(r)}|V|$ for some $x \in G$ and we get the result by enlarging $V$ if necessary to be the group generated by $x+V$ -- this is at most twice as big.
\end{proof}

\section*{Acknowledgment}

The author should like to thank Ben Green for suggesting that the writing of this paper might make the work of \cite{san::12} more accessible.

\bibliographystyle{halpha}

\bibliography{references}

\end{document}